\documentclass[smallextended, envcountsect, envcountsame]{svjour3}
\usepackage{amsmath,amsfonts,ntheorem,enumerate,amssymb,textcomp}
\usepackage[mathscr]{euscript}
\usepackage[cp1250]{inputenc}
\usepackage[T1]{fontenc}

\def\={\discretionary{-}{-}{-}}

\newtheorem{ttheorem}{Theorem}
\newtheorem{wn}{Corollary}
\newtheorem{ddefinition}{Definition}
\theoremstyle{remark}
\newtheorem*{prob}{Problem}
\newtheorem*{que}{Question}

\authorrunning{Piotr Sworowski}
\institute{Piotr Sworowski \at Casimirus the Great University, Institute of Mathematics, Powsta\'nc\'ow Wielkopolskich 2, 85-090 Bydgoszcz, Poland, e-mail: {\tt piotrus@ukw.edu.pl}}
\title{An HK$_r$-integrable function which is P$_s$-integrable for no~$s$}
\begin{document}
\maketitle
\begin{abstract}
Given arbitrary $r\ge1$, we construct an HK$_r$-integrable function which is not P$_1$-integrable. This is an extension of a recently published Musial et al.\ construction [Musial, P., Skvortsov, V.,  Tulone, F.: The HK$_r$-integral is not contained in the P$_r$-integral. Proc. Amer. Math. Soc. {\bf150}(5), 2107--2114 (2022)].

\keywords{Riemann sum\and major/minor function\and HK$_r$-integral\and P$_r$-integral}

\subclass{26A39}
\end{abstract}

\bigskip
One of central topics in the generalized integration theory in the real line or, more generally, in~$\mathbb R^n$, is the correspondence between integral defined in the realm of integral sums (e.g., Riemann sums) and antidifferentiation. A standpoint result in this connection, valid in various settings (e.g., for differentiation bases), claims that generalized antidifferentiation (a Perron-type integral) and the integration w.r.t.\ Riemann sums (a Henstock--Kurzweil-type integral) are equivalent if defined correspondingly to each other. This result, however, need not extend to all possible settings, in particular to those where generalized derivates and integral sums are defined with respect to some integral means.

\medskip
Musial and Sagher in \cite{MusialSagher2004} made an attempt to construct a Henstock--Kurzweil-type integral (under the name {\em HK$_r$-integral}) which would cover the so-called P$_r$-integral, a Perron-type integral based on derivates defined via $L^r$ integral means~\cite{gordon}. This attempt wasn't successful in the sense that these two integrals turned out to be nonequivalent, a result announced recently in~\cite{MST1}. In the present note we demonstrate that this nonequivalence is actually much more dramatic and holds for all pairs of parameters $r,s\ge1$ (HK$_r$-integral vs.\ P$_s$-integral).
\section{Preliminaries}
\subsection*{HK$_r$-integral}
In order to make this presentation as compact as possible, we give up a detailed introduction to the $L^r$-Henstock--Kurzweil integral, which can be found e.g.\ in~\cite{MusialSagher2004}, and confine ourselves to its descriptive characterization as given in what follows.
\begin{ddefinition}\label{111}
A function $F\in L^r[a,b]$ is said to be {\it$L^r$-differentiable} at $x\in(a,b)$, if there exists a real number $\alpha$ such that
\[\left(\frac1h\int_{-h}^{h}\vert F(x+t)-F(x)-\alpha t\vert^r\mkern1.5mu dt\right)^{\!{1/r}}=o(h)\]
as $h\to0$. In this case we say that $\alpha$ is the {\it$L^r$-derivative} of $F$ at $x$ and write $F_r'(x)=\alpha$.
\end{ddefinition}
Obvious modifications of the above lead to the definition of $F_r'(a)$ and $F_r'(b)$.
\begin{ttheorem}\label{riwelb}
A function $f\colon[a,b]\to\mathbb R$ is $L^r$ Henstock--Kurzweil integrable (HK$_r$-integrable) iff there exists $F\in L^{r}[ a,b] $ such that for some $E\subset[a,b]$ of Lebesgue measure zero,
\begin{itemize}
\item $F'_r(x)=f(x)$ at all $x\in[a,b]\setminus E$;
\item $F$ has the AC$_r$ property on $E$, that is, to every $\varepsilon>0$ one can find a $\delta\colon E\to(0,\infty)$ such that if $[c_1,d_1],\dots,[c_n,d_n]$ is a collection of nonoverlapping subintervals in $[a,b]$ such that $[c_i,d_i]\subset(x_i-\delta(x_i),x_i+\delta(x_i))$ for some $x_i\in E\cap[c_i,d_i]$, $i=1,\dots,n$, then
$$\sum_{i=1}^n\left(\frac1{d_i-c_i}\int_{c_i}^{d_i}|F(y)-F(x_i)|^r\,dy\right)^{\!1/r}<\,\varepsilon.$$
\end{itemize}
\end{ttheorem}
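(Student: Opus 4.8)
The plan is to prove the two implications separately, the one from the descriptive conditions to HK$_r$-integrability being the more routine. Suppose $F\in L^r[a,b]$, with a fixed representative, satisfies $F'_r(x)=f(x)$ for $x\in[a,b]\setminus E$ and has the AC$_r$ property on $E$; I would show that this same $F$ is an $r$-primitive of $f$, whence $f$ is HK$_r$-integrable with $\int_a^b f=F(b)-F(a)$. Fix $\varepsilon>0$. For each $x\in[a,b]\setminus E$ the $L^r$-differentiability provides, for the tolerance $\varepsilon/(2(b-a))$, a radius $\delta(x)>0$ with $\bigl(\tfrac1h\int_{-h}^{h}|F(x+t)-F(x)-f(x)t|^r\,dt\bigr)^{1/r}<\tfrac{\varepsilon}{2(b-a)}h$ for $0<h<\delta(x)$; if a tagged interval $([c,d],x)$ lies inside $(x-\delta(x),x+\delta(x))$, then with $h:=\max\{x-c,d-x\}$ one has $[c,d]\subset[x-h,x+h]$ and $h\le d-c$, so enlarging the interval of integration gives $\bigl(\tfrac1{d-c}\int_c^d|F(y)-F(x)-f(x)(y-x)|^r\,dy\bigr)^{1/r}\le\tfrac{\varepsilon}{2(b-a)}(d-c)$. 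For each $x\in E$ I would choose $\delta(x)$ so small that it simultaneously realizes the AC$_r$ gauge for tolerance $\varepsilon/4$ and forces $\sum|f(x_i)|(d_i-c_i)<\varepsilon/4$ on every packing of $E$-tagged intervals --- possible because $E$ is null and $f$ real-valued, on slicing $E$ into $\{x\in E:m-1\le|f(x)|<m\}$ and covering each slice by a thin open set. Minkowski's inequality then bounds the $E$-part of the Riemann sum by $\varepsilon/2$, and with $\delta$ the minimum of the two gauges, Cousin's lemma shows every $\delta$-fine tagged partition of $[a,b]$ yields a Riemann sum below $\varepsilon$.

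For the converse I would take $F$ to be an $r$-primitive of $f$; then $F\in L^r[a,b]$ automatically, and up to an additive constant $F$ agrees with the indefinite HK$_r$-integral $x\mapsto\int_a^x f$. The workhorse is a Saks--Henstock-type lemma: the gauge witnessing HK$_r$-integrability with tolerance $\varepsilon$ controls not merely full $\delta$-fine partitions of $[a,b]$ but every $\delta$-fine partial tagged partition, so that $\sum\bigl(\tfrac1{d_i-c_i}\int_{c_i}^{d_i}|F(y)-F(x_i)-f(x_i)(y-x_i)|^r\,dy\bigr)^{1/r}<\varepsilon$ over all such families. Granting this, I would let $E$ be the set of points at which $F$ fails to be $L^r$-differentiable with derivative $f$ and prove $|E|=0$. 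Writing $E=\bigcup_{m,k}E_{m,k}$, where $E_{m,k}$ consists of those $x\in E$ with $|f(x)|\le k$ and $\limsup_{h\to0^+}h^{-1}\bigl(\tfrac1h\int_{-h}^{h}|F(x+t)-F(x)-f(x)t|^r\,dt\bigr)^{1/r}>1/m$, each $x\in E_{m,k}$ admits arbitrarily small symmetric $\delta$-fine tagged intervals $([x-h,x+h],x)$ whose contribution to the partial-partition sum exceeds $h/(2^{1/r}m)$; a disjoint Vitali subcollection covering $E_{m,k}$ up to a null set then, through the Saks--Henstock estimate, pushes the outer measure of $E_{m,k}$ below an arbitrarily small multiple of $\varepsilon$, so $E_{m,k}$, and hence $E$, is null. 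On this $E$ the AC$_r$ property is finally extracted from the same partial-partition sums, absorbing the term $f(x_i)(y-x_i)$ by Minkowski's inequality and, once more, the slicing-and-covering device that renders $\sum|f(x_i)|(d_i-c_i)$ negligible over $E$-tagged packings.

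I expect the genuine obstacle to lie in the converse, and within it in the measure-theoretic step that the non-$L^r$-differentiability set is null. What one controls uniformly over partial partitions is an $L^r$ average of $|F(y)-F(x_i)-f(x_i)(y-x_i)|$ across each subinterval, not a pointwise increment of $F$, so the classical oscillation-and-Vitali bookkeeping has to be carried out with $r$-norms throughout: Minkowski's inequality is invoked repeatedly, and one must check that replacing an $L^r$ average over $[c_i,d_i]$ by one over a symmetric subinterval centred at the tag costs at most a bounded factor. By contrast the Saks--Henstock lemma, though indispensable, is a routine transcription of its classical counterpart, and the first implication reduces to bookkeeping once the two gauges are in place.
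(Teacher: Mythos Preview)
Your outline is sound, but note that the paper does not actually prove this theorem: immediately after the statement it simply records that ``the theorem easily follows from \cite[Theorem~14]{MusialSagher2004}''. What you are sketching---the gauge-by-gauge verification of the forward implication, and for the converse an $L^r$ Saks--Henstock lemma for partial partitions followed by a Vitali covering argument to show the non-$L^r$-differentiability set is null, with the slicing device absorbing the $f(x_i)(y-x_i)$ term on the exceptional set---is essentially how that cited result is established in \cite{MusialSagher2004}. So your route and the paper's differ only in that you reconstruct the referenced argument rather than invoke it. Your diagnosis of where the real work lies (the measure-zero step in the converse, carried out with $L^r$-averages in place of pointwise oscillations) is accurate, and the bounded-factor cost of passing between a tagged interval and the symmetric interval centred at the tag is indeed the only estimate that needs care there.
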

The theorem easily follows from \cite[Theorem 14]{MusialSagher2004}.
\subsection*{P$_r$-integral}
A function $F\in L^r[a,b]$ is said to be {\it$L^r$-continuous} at $x\in(a,b)$ if
\[\left(\frac1h\int_{-h}^h\lvert F(x+t)-F(x)\rvert^r\mkern1.5mu dt\right)^{\!{1/r}}\!\to\,0\]
as $h\to0$. $L^r$-continuity of $F$ at $a$ and $b$ is understood accordingly.

Analogously like in Definition \ref{111} one can define upper and lower $L^r$-derivates. Namely, the {\em upper-right $L^r$-derivate} of $F\in L^r[a,b]$ at $x\in[a,b)$, denoted by $D_r^+F(x)$, is defined as the infimum of all numbers $\alpha$ such that
\begin{equation}\label{upper-right}
\left(\frac1h\int_0^h[F(x+t)-F(x)-\alpha t]_+^r\mkern1.5mu dt\right)^{\!1/r}\!=\,o(h)\tag{$\dagger$}
\end{equation}
as $h\rightarrow0^+$; here $[w]_+$ stands for $\max{\{w,0\}}$. If no real number $\alpha$ satisfies~\eqref{upper-right}, we set $D_r^+F(x)=+\infty$. The {\em lower-right $L^r$-derivate} of $F$ at $x$ is
$$D_{+,r}F(x)=-D_r^+(-F)(x).$$
The {\em upper-left} and {\em lower-left $L^r$-derivates} of $F$ at $x\in(a,b]$, $D_r^-F(x)$ and $D_{-,r}F(x)$, are defined in a similar manner. Bilateral lower and upper $L^r$-derivates $\underline{D}_{\mkern1mu r}F(x)$ and $\overline{D}_rF(x)$ at $x\in[a,b]$, are understood accordingly. A function $m\in L^r[a,b]$ is said to be an {\em$L^r$-minor function} of $f\colon[a,b]\to\mathbb R$ if
\begin{itemize}
\item $m(a)=0$,
\item $m$ is $L^r$-continuous everywhere in $[a,b]$,
\item $\overline{D}_{\mkern1mu r}m(x)\le f(x)$ at almost every $x\in[a,b]$,
\item $\overline{D}_{\mkern1mu r}m(x)<\infty$ nearly everywhere in $[a,b]$.
\end{itemize}
We say $M$ is an {\em$L^r$-major function} for $f$ if $-M$ is an $L^r$-minor function for $-f$. An $f$ is said to be {\em$L^r$-Perron integrable} (or {\em$P_r$-integrable}) if for every $\varepsilon>0$ there are an $L^r$-major function $M$ and an $L^r$-minor function for $f$ such that $M(b)-m(b)<\varepsilon$. Then one defines $\int_a^bf=\sup_mm(b)=\inf_MM(b)$ (this definition is correct since $M-m$ is always nondecreasing \cite[Theorem 6]{gordon}).
\section{The result}
In \cite{MusialSagher2004,MST1} it has been demonstrated what follows.
\begin{ttheorem}
Given $r\ge1$, each P$_r$-integrable function is HK$_r$-integrable, but there exists an HK$_r$-integrable function which is not P$_r$-integrable.
\end{ttheorem}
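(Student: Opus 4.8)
The plan is to establish the two assertions separately, using the descriptive characterization of Theorem~\ref{riwelb} together with the major/minor-function calculus recalled above. For the inclusion $\mathrm P_r\subseteq\mathrm{HK}_r$ I would take as candidate primitive the indefinite $\mathrm P_r$-integral $F(x)=\int_a^x f$. Fixing an $L^r$-minor function $m$ and an $L^r$-major function $M$ of $f$, one has $m\le F\le M$ pointwise (because $F(x)=\sup_{m'}m'(x)=\inf_{M'}M'(x)$ and $M-m$ is nondecreasing with $(M-m)(a)=0$), so $F$ is squeezed between two $L^r$ functions and hence $F\in L^r[a,b]$; the same squeezing plus the $L^r$-continuity of $m$ and $M$ shows $F$ is $L^r$-continuous on $[a,b]$. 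At a.e.\ $x$ we have $\overline{D}_r m(x)\le f(x)\le\underline{D}_r M(x)$, and a standard argument from Perron-integral theory, transplanted to the $L^r$-setting, then gives $F_r'(x)=f(x)$ for a.e.\ $x$, say off a null set $E$. To get the $\mathrm{AC}_r$ property on $E$ one enlarges $E$ by the countable set where $\overline{D}_r m$ or $\underline{D}_r M$ is infinite, and then, given $\varepsilon>0$, chooses $M,m$ with $M(b)-m(b)<\varepsilon$; since $M-m$ is nondecreasing of total variation $<\varepsilon$, a Vitali-type choice of the gauge $\delta$ on $E$ bounds the sums appearing in Theorem~\ref{riwelb} by a quantity controlled by $\varepsilon$ and by the $L^r$-moduli of continuity of $m$ and $M$. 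Theorem~\ref{riwelb} then applies, giving $\mathrm{HK}_r$-integrability of $f$.

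The substantive half is producing, for a fixed $r\ge1$, an $\mathrm{HK}_r$-integrable $f$ on $[0,1]$ that is not $\mathrm P_r$-integrable. I would fix a Cantor-type set $C\subset[0,1]$ of Lebesgue measure zero and define $F\in L^r[0,1]$ by placing on each interval contiguous to $C$ a scaled oscillating ``bump'', the bump on an interval of generation $n$ having height $H_n$, width $w_n$ and oscillation frequency chosen in terms of parameters depending only on $r$, with $F=0$ on $C$. The parameters are to be tuned so that: (a)~$F$ is $L^r$-differentiable with $F_r'=0$ at every point of $[0,1]\setminus C$, so that $f:=F_r'\cdot\mathbf 1_{[0,1]\setminus C}$ admits $F$ as an $L^r$-primitive off the null set $C$; (b)~the $L^r$-sizes of the bumps are summable strongly enough that $F$ has the $\mathrm{AC}_r$ property on $C$, whence $f$ is $\mathrm{HK}_r$-integrable by Theorem~\ref{riwelb}; and (c)~there is an \emph{uncountable} set $C_0\subseteq C$ at each point $c$ of which the $L^r$-Taylor remainder of $F$ oscillates uncontrollably, in the sense that for a sequence $h_k\downarrow0$
\[
\frac1{h_k}\int_0^{h_k}\bigl[F(c+t)-F(c)-\alpha t\bigr]_+^r\,dt\;\ge\;\psi(h_k)\,h_k^{\,r}
\]
holds simultaneously for all $\alpha$ in any fixed bounded set, with $\psi(h_k)\to\infty$, and moreover this wildness is genuinely two-sided, so that no nondecreasing modification of $F$ can flatten it.

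Granting (a)--(c), suppose toward a contradiction that $f$ had an $L^r$-minor function $m$. The fundamental inequality for $L^r$-minor functions (the increment of $m$ over any interval disjoint from $C$ is at most the integral of $f$ over that interval) shows that $F-m$ is nondecreasing on each interval contiguous to $C$; combined with the $L^r$-continuity of $m$ on $[0,1]$ and estimate~(c), this forces, at all but countably many $c\in C_0$, the failure of one of the defining properties of a minor function---either the $L^r$-continuity of $m$ at $c$ or the finiteness of $\overline{D}_r m(c)$. Since $C_0$ is uncountable, this is the desired contradiction, so $f$ has no $L^r$-minor function and is therefore not $\mathrm P_r$-integrable. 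I expect the main obstacle to be precisely the simultaneous realization of (b) and (c): estimate~(c) must be strong and oscillatory enough to survive the passage from $F$ to $m$ (a nondecreasing correction being unable to absorb it), while the $L^r$-sizes of the bumps must stay summable enough to yield the $\mathrm{AC}_r$ property; these requirements pull in opposite directions, and balancing them---by a careful choice of the Cantor ratios and of the triples $(H_n,w_n,\text{frequency})$---is exactly where the Musial--Skvortsov--Tulone construction has to be sharpened.
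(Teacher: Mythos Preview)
The paper does not itself prove this theorem; it cites \cite{MusialSagher2004,MST1} and then, in the next theorem, sharpens the Musial--Skvortsov--Tulone construction. Your sketch for the inclusion P$_r\subset\,$HK$_r$ follows standard Perron lines, and your plan for the counterexample has the right architecture (Cantor nullset, bumps on contiguous intervals, AC$_r$ on the nullset, minor-function contradiction), but it contains a genuine error.

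Your condition~(a) forces $F_r'=0$ on all of $[0,1]\setminus C$, and you then set $f:=F_r'\cdot\mathbf 1_{[0,1]\setminus C}$. This makes $f\equiv0$, which is Lebesgue integrable and hence certainly P$_r$-integrable, so no contradiction can follow. In the actual construction (both in \cite{MST1} and in the paper's refinement) the bump on an $n$th-generation contiguous interval $u_n$ is simply a smooth plateau of height~$n$ on a tiny concentric subinterval $v_n$; the function shown to be non-P$_r$-integrable is $f=F'$, a genuinely nonzero smooth function off the Cantor set, not the zero function. No ``oscillation frequency'' is used --- the mechanism is that plateau heights tend to infinity while the supports $v_n$ shrink fast enough to keep $F\in L^r$ and AC$_r$ on~$P$, yet slowly enough (relative to~$u_n$) that the $L^r$-means of $F$ near points of $P$ blow up. There is a second gap as well: you deduce only that $F-m$ is nondecreasing on each interval contiguous to~$C$, but the derivate blow-up must be established at points $x\in C$, which requires controlling $m(x+t)-m(x)$ with $x\in C$ and $x+t$ inside a nearby bump. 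The correct argument is global: if $f$ were P$_r$-integrable then $F$ would be its indefinite P$_r$-integral, whence $R=F-m$ is nondecreasing on all of $[0,1]$ by \cite[Theorem~6]{gordon}; this global bound $0\le R\le R(1)$ is precisely what lets one conclude $D_r^+m(x)=+\infty$ at every $x\in P$ that is not right-isolated, an uncountable set.
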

Below we adjust some parameters of the construction from \cite{MST1} in order to obtain a stronger result.
\begin{ttheorem}
Given $r\ge1$, there exists an HK$_r$-integrable function which is not P$_1$-integrable.
\end{ttheorem}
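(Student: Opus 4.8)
\noindent\emph{Plan of the construction.}\quad The plan is to keep the skeleton of the example of \cite{MST1} and to re-choose its free parameters, this time as functions of $r$. The primitive $F$ is assembled from affinely rescaled copies of one fixed building block $\varphi$ placed on a sequence of pairwise disjoint open intervals $J_k=(a_k,b_k)\subset(0,1)$ that accumulate at a single point ($b_{k+1}<a_k$, $a_k\downarrow0$), with $F\equiv0$ off $\overline{\bigcup_kJ_k}$ and $f:=F'_r$ where that derivative exists ($f:=0$ elsewhere); the copy on $J_k$ is $x\mapsto\lambda_k\varphi\bigl((x-a_k)/|J_k|\bigr)$. The free parameters are the lengths $|J_k|$, the heights $\lambda_k$, and the internal scales of $\varphi$. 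Everything hinges on a single tension: the parameters must be tame enough that the $L^r$-oscillation of $F$ near the points where it is not $L^r$-differentiable is summable in the way Theorem \ref{riwelb} demands, yet wild enough that $F$ ceases to be a P$_1$-primitive.

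\noindent\emph{HK$_r$-integrability.}\quad This I would read straight off Theorem \ref{riwelb}. Let $E$ be the union of $\{0\}$, the endpoints $\{a_k,b_k\}$, and the rescaled images inside the $J_k$ of the (countable) set where $\varphi$ itself is not $L^r$-differentiable; then $E$ is countable, a fortiori Lebesgue-null. Off $E$ one has $F'_r=f$ at once: $F$ vanishes in a whole neighbourhood of each point of $(0,1)\setminus\overline{\bigcup_kJ_k}$, and inside a $J_k$ the assertion is, after the affine substitution of Definition \ref{111}, precisely the $L^r$-differentiability of $\varphi$. Membership $F\in L^r[0,1]$ is just $\sum_k\lambda_k^r|J_k|\int_0^1|\varphi|^r<\infty$. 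The substantive point is the AC$_r$ property on $E$: given $\varepsilon>0$ one must produce a gauge $\delta\colon E\to(0,\infty)$. For $x_0\in E$ attached to some $\overline{J_k}$ one first records, for short intervals $I\ni x_0$, a bound
\[\Bigl(\tfrac{1}{\ell}\int_I|F(y)-F(x_0)|^r\,dy\Bigr)^{1/r}\le\Theta_k(\ell),\]
with $\Theta_k$ explicit in the parameters and in the local behaviour of $\varphi$; then, using that an admissible nonoverlapping packing meets each $\overline{J_k}$ in only boundedly many intervals and meets the tail $\bigcup_{k\ge K}J_k$ near $0$ in a controlled way, one shrinks $\delta$ until the whole sum drops below $\varepsilon$. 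This is where the parameter choice is really used, but each single estimate is routine.

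\noindent\emph{Failure of P$_1$-integrability.}\quad This is the crux, and the step I expect to be the main obstacle. Suppose $f$ were P$_1$-integrable, and fix an $L^1$-major function $M$ and an $L^1$-minor function $m$ for $f$ with $M(1)-m(1)<\varepsilon$. Since $F$ is $L^1$-continuous everywhere and, off the null set $E$, is $L^1$-differentiable with $F'_1=f$ (because $L^r$-differentiability forces $L^1$-differentiability, $\|\cdot\|_1\le\|\cdot\|_r$ being valid on any probability space), the inequalities $\underline{D}_1 M\ge f$ and $\overline{D}_1 m\le f$, together with finiteness of the derivates nearly everywhere, give $\underline{D}_1(M-F)\ge0$ and $\underline{D}_1(F-m)\ge0$ almost everywhere, all three functions being $L^1$-continuous; the $L^1$-form of the monotonicity comparison for major and minor functions (\cite[Theorem 6]{gordon} in the case $r=1$) then makes $M-F$ and $F-m$ nondecreasing, whence $0\le M-F\le M-m<\varepsilon$ and likewise $0\le F-m<\varepsilon$. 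So any P$_1$-major/minor pair for $f$ is trapped uniformly within $\varepsilon$ of $F$; in particular, if $f$ were P$_1$-integrable its primitive would be forced to be $F$ itself. Now the parameters will have been set — just as in \cite{MST1} for P$_r$, only more aggressively — so that $F$ violates a property necessary for P$_1$-primitives, the obstruction being exhibited (as there) by a Vitali-type derivation argument along the accumulation structure of the $J_k$; this is the contradiction sought.

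\noindent The genuine work is therefore concentrated in that last clause: driving $F$ out of the class of P$_1$-primitives while still meeting the $L^r$-side conditions that keep $f$ HK$_r$-integrable, two requirements that pull against each other. It is harder than the P$_r$ case of \cite{MST1}: since P$_1$-integrability is implied by P$_s$-integrability for every $s\ge1$ — each $L^s$-major (respectively minor) function being automatically an $L^1$-one, by the same comparison $\|\cdot\|_1\le\|\cdot\|_s$ — the class of P$_1$-primitives contains every class of P$_s$-primitives, so escaping it is a strictly stronger demand than escaping the P$_r$-primitives. That is why the parameters must be recalibrated in terms of $r$; and it is why the $f$ so produced is in fact P$_s$-integrable for no $s\ge1$, as the title claims.
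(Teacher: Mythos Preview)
Your proposal has a structural gap that makes the construction, as described, unable to reach the conclusion. You place the rescaled blocks on a sequence of intervals $J_k$ accumulating at a \emph{single} point, and you take the exceptional set $E$ to be countable (the point $0$, the endpoints $a_k,b_k$, and the countably many bad points of $\varphi$ inside each $J_k$). But with a countable $E$ the very argument you use for HK$_r$-integrability turns against you: off $E$ you have $F'_r=f$, hence $F'_1=f$; on $E$ the AC$_r$ property forces $L^r$-continuity of $F$ at each point of $E$ (take a single interval in the AC$_r$ inequality), hence $L^1$-continuity. Thus $F$ is $L^1$-continuous everywhere, $\overline D_1F=\underline D_1F=f$ off the countable set $E$, and the derivates are finite nearly everywhere. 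That is exactly the definition of an $L^1$-minor \emph{and} an $L^1$-major function for $f$. So $f$ \emph{is} P$_1$-integrable, with indefinite integral $F$. No ``Vitali-type derivation argument along the accumulation structure of the $J_k$'' can rescue this, because a single accumulation point, or any countable bad set, is already absorbed by the ``nearly everywhere'' clause in the definition of $L^1$-minor functions.

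This is precisely why the paper --- and, despite your description, the construction of \cite{MST1} that it modifies --- does \emph{not} use a one-point accumulation. It builds $F$ over a symmetric Cantor-type null set $P$, with $F$ supported on small concentric subintervals $v_n\subset u_n$ of the contiguous intervals of $P$, the heights $n$ growing unboundedly. The exceptional set for Theorem \ref{riwelb} is $E=P$, uncountable though null; the AC$_r$ check on $P$ is the calculation the paper actually performs. For the non-P$_1$ step one does not argue that $F$ fails to be a P$_1$-primitive; one shows directly that every putative $L^1$-minor function $m$ must have $D_1^+m(x)=+\infty$ at every point $x\in P$ that is not right-isolated in $P$. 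That set is co-countable in $P$, hence uncountable, and this violates the ``$\overline D_1 m<\infty$ nearly everywhere'' requirement. The uncountability of $P$ is therefore not incidental: it is the mechanism that defeats the minor-function condition, and any correct proof must manufacture an uncountable set of bad points.
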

\begin{proof}
Denote with $P\subset[0,1]$ a Cantor-like symmetric set with contiguous intervals $u_n$ of rank $n$, $n\in\mathbb N$, of length
$$u_n=\frac{4^r-2}{4^{rn}},$$
where the numerator is chosen so that $P$ is a nullset:
$$\sum_{n=1}^\infty\frac{2^{n-1}}{4^{rn}}=\frac{1}{4^r-2}.$$
In each $u_n$ pick a concentric open segment $v_n$ so that
$$\frac{v_n}{u_n}=\frac1{3^{nr}}.$$
With these parameters we can proceed along the lines as in the construction of \cite[Theorem 3.1]{MST1}.

\medskip
We define an indefinite HK$_r$-integral $F\colon[0,1]\to\mathbb R$ as follows: we set $F=n$ on every $v_n$ and $0$ elsewhere. Using the same convention as in \cite{MST1}, we agree $F$ can be made smooth over every contiguous interval $u_n$ (by a suitable mollification around endpoints of $v_n$) in the way that no subsequent estimate is affected. Notice that $F\in L^r$ as
$$\int_0^1|F|^r=\sum_{n=1}^\infty n^r2^{n-1}v_n=(2^{2r-1}-1)\sum_{n=1}^\infty n^r\biggl(\frac2{12^r}\biggr)^{\!n}<\infty.$$
Next, we check that $F$ is indeed an indefinite HK$_r$-integral (of~$F'$), which boils down to checking that it has the AC$_r$ property on $P$ (Theorem \ref{riwelb}). Given $\varepsilon>0$ pick an $n$ such that
$$3^{1/r}\!\!\sum_{k=n+1}^\infty\!k\biggl(\frac23\biggr)^{\!k}<\varepsilon$$
and denote $\eta=(u_n-v_n)/2$. Consider any collection $\{I_i\}_{i=1}^q$ of nonoverlapping segments in $[0,1]$, $I_i\cap P\ne\emptyset$, such that $\sum_{i=1}^q|I_i|<\eta$. By the definition of $\eta$, $I_i\cap v_k=\emptyset$ for all $i$ and all $k\le n$. So, if $I_i\cap v_k\ne\emptyset$, then $k>n$ and
$$|I_i\cap u_k|>\frac{u_k-v_k}2=\frac{u_k}2\biggl(1-\frac1{3^{kr}}\biggr)\ge\frac{u_k}3,$$
and so
$$\frac1{|I_i\cap u_k|}\int_{I_i\cap u_k}\!|F|^r\le\frac3{u_k}\int_{u_k}\!|F|^r=\frac{3k^r}{3^{kr}}.$$
Thus, using a double decker inequality
$$\frac{\sum_ja_j}{\sum_jb_j}\le\sum_j\frac{a_j}{b_j},$$
which is valid for positive $a_j,b_j$, provided the left hand side is meaningful, we obtain
\begin{align}
\sum_{i=1}^q\left(\frac1{|I_i|}\int_{I_i}|F|^r\right)^{\!1/r}&\le\,\sum_{i=1}^q\left(\,\sum_{k:I_i\cap v_k\ne\emptyset}\frac1{|I_i\cap u_k|}\int_{I_i\cap u_k}\!\!|F|^r\right)^{\!1/r}\nonumber\intertext{(recall $P$ is a nullset). Then, we use the inequality $\sum_ja_j^{1/r}\ge\bigl(\sum_ja_j\bigr)^{1/r}$, valid for positive $a_j$, and continue with}
&\le\sum_{i,k:I_i\cap v_k\ne\emptyset}\left(\frac1{|I_i\cap u_k|}\int_{I_i\cap u_k}\!\!|F|^r\right)^{\!1/r}\nonumber\\
&\le\sum_{k=n+1}^\infty2\cdot2^{k-1}\biggl(\frac{3k^r}{3^{kr}}\biggr)^{\!1/r}=3^{1/r}\!\!\sum_{k=n+1}^\infty\!k\biggl(\frac23\biggr)^{\!k}<\varepsilon.\label{star}\tag{$\star$}
\end{align}
Thus $F$ is AC$_r$ on $P$.

\smallskip
Now, we are going to prove $F'$ is not P$_1$-integrable. Suppose not, then $F=(\textup{P}_1)\int F'$. There should exist an L$_1$-minor function $m$ for $F'$; then the difference $R=F-m$ will be a non-decreasing function on $[0,1]$ \cite[Theorem~6]{gordon}. Take any point $x\in P$ which is not right-hand isolated in~$P$. Given arbitrarily large~$N$, take an $n>N$ such that $x\in r_n\subset r_{n-1}$, where $r_n$ denotes any of the segments constituting the segment $[0,1]$ with all intervals $u_k$ up to rank $n$ removed. Note that
\begin{align*}
r_n&=\frac1{2^n}\left(1-\sum_{k=1}^n\frac{4^r-2}{4^{rk}}\cdot2^{k-1}\right)=\frac 1{4^{nr}}=\frac{u_n}{4^r-2}<u_n.
\end{align*}
Here we can assume $r_n$ is the left one of the two segments $r_n$ contained in $r_{n-1}$. Let $u_n$ be concentric with this $r_{n-1}$, take $h_n>0$ so that $x+h_n$ is the right endpoint of this $u_n$. We have
$$h_n\le r_n+u_n<2u_n.$$
Take arbitrary $\alpha\in\mathbb R$ and consider $n>R(1)+|\alpha|+1$; then $[m(x+t)-m(x)-\alpha t]_+=[F(x+t)-R(x+t)+R(x)-\alpha t]_+>1$ for all $x+t\in v_n\subset u_n$. Estimate
$$\frac1{h_n^2}\int_0^{h_n}[m(x+t)-m(x)-\alpha t]_+\,dt>\frac{v_n}{h_n^2}>\frac{v_n}{4u_n^2}=\frac1{4\mkern.6mu(4^r-2)}\cdot\biggl(\frac43\biggr)^{\!nr}.$$
This means, as $n$ could have been taken arbitrarily large, that $D_r^+m(x)=\infty$. Since $x$ here comes from a co-countable subset of~$P$, $m$ cannot be an L$^1$-minor function for~$F'$. Therefore $F'$ is not P$_1$-integrable.
\end{proof}
\begin{wn}
For every $r\ge1$ there is an HK$_r$-integrable function which is P$_s$-integrable for no $s$.
\end{wn}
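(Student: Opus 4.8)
The plan is to derive the corollary directly from the theorem just proved, using the classical inclusion between P$_s$-integrals for different parameters. First I would recall the well-known monotonicity property of the scale of $L^s$-means: by Jensen's inequality, for $1\le s\le t$ one has $\bigl(\frac1h\int|g|^s\bigr)^{1/s}\le\bigl(\frac1h\int|g|^t\bigr)^{1/t}$ on any interval of length $h$. Applying this to $g(t)=[F(x+t)-F(x)-\alpha t]_+$ shows that $L^s$-continuity is implied by $L^t$-continuity and that $D_t^+F(x)\ge D_s^+F(x)$ pointwise (and similarly for the other three derivates); consequently every $L^t$-minor function is an $L^s$-minor function, every $L^t$-major function is an $L^s$-major function, and therefore P$_s$-integrability implies P$_t$-integrability whenever $s\le t$. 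In particular, for any $s\ge1$, P$_s$-integrability implies P$_{s'}$-integrability for every integer $s'\ge s$, and a function that is P$_s$-integrable for some $s\ge1$ is in particular P$_s$-integrable with $s$ an arbitrarily large parameter — but we want to go the other direction, so the point is rather that P$_s$-integrability implies P$_1$-integrability only when \emph{$s\le1$}, i.e. exactly when $s=1$; for $s>1$ this argument runs the wrong way, so a slightly different packaging is needed, which I address next.

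The clean route is as follows. Fix $r\ge1$ and let $f=F'$ be the function constructed in the proof of the theorem, with the parameters chosen there; it is HK$_r$-integrable but not P$_1$-integrable. I claim $f$ is P$_s$-integrable for no $s\ge1$. Suppose to the contrary that $f$ were P$_s$-integrable for some $s>1$. The construction actually shows more than non-P$_1$-integrability: inspecting the final paragraph of the proof, for every $\alpha\in\mathbb R$ and every sufficiently large $n$ one has the lower bound $\frac1{h_n^{2}}\int_0^{h_n}[m(x+t)-m(x)-\alpha t]_+\,dt>\frac1{4(4^r-2)}(4/3)^{nr}$ for $x$ in a co-countable subset of $P$; the only place the exponent $1$ (rather than $s$) entered was in writing this one-dimensional integral mean. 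I would instead estimate the $L^s$-mean: since $[m(x+t)-m(x)-\alpha t]_+>1$ for $x+t\in v_n$, we get $\bigl(\frac1{h_n}\int_0^{h_n}[m(x+t)-m(x)-\alpha t]_+^s\,dt\bigr)^{1/s}\ge(v_n/h_n)^{1/s}>\bigl(v_n/(2u_n)\bigr)^{1/s}$, and since $v_n/u_n=3^{-nr}$ this is $\ge c\,3^{-nr/s}$, whereas for \eqref{upper-right} with exponent $s$ to hold we would need this mean to be $o(h_n)$, i.e. $o(u_n)=o(4^{-nr})$. Because $3^{-nr/s}/4^{-nr}=(4/3^{1/s})^{nr}\to\infty$ for every fixed $s\ge1$ (indeed $3^{1/s}\le3<4$), the same contradiction as before goes through verbatim: $D_s^+m(x)=\infty$ on a co-countable subset of $P$, so no $L^s$-minor function for $f$ can exist, and $f$ is not P$_s$-integrable.

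Thus the single function $f=F'$ is simultaneously HK$_r$-integrable (by the first half of the theorem's proof, which is insensitive to $s$) and P$_s$-integrable for no $s\ge1$, which is exactly the statement of the corollary. The main obstacle, and the only genuine point requiring care, is verifying that the quantitative blow-up rate in the theorem's proof survives when the exponent is raised from $1$ to an arbitrary $s$: one must confirm that the contiguous-interval ratios $v_n/u_n=3^{-nr}$ and $u_n\asymp4^{-nr}$ were chosen with enough room that $(v_n/u_n)^{1/s}/u_n\to\infty$ holds uniformly over all $s\ge1$ — which it does, precisely because $3^{1/s}<4$ for every such $s$, so the exact same $n$-selection and the same co-countable exceptional set work for all $s$ at once. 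Everything else — the Jensen-type monotonicity remark, the structure of the exceptional set, the bound $h_n<2u_n$ — is reused unchanged from the theorem.
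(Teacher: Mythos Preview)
Your direct computation in the second and third paragraphs is correct and yields a valid proof of the corollary. However, the first paragraph contains a slip that causes you to work much harder than necessary. From ``every $L^t$-minor function is an $L^s$-minor function'' and the analogous statement for major functions (both correct for $s\le t$), the conclusion is that P$_t$-integrability implies P$_s$-integrability for $s\le t$, not the reverse as you wrote. In particular, P$_s$-integrable for any $s\ge1$ implies P$_1$-integrable. Taking the contrapositive, the function $f=F'$ from the theorem, being not P$_1$-integrable, is automatically P$_s$-integrable for no $s\ge1$. This one-line deduction is exactly how the paper obtains the corollary (it is stated without proof immediately after the theorem).

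So your approach is not wrong, just circuitous: once the monotonicity is oriented correctly there is no need to redo the derivate estimate with a general exponent~$s$. That said, your direct argument has the minor independent merit of showing explicitly that the parameters $v_n/u_n=3^{-nr}$ and $u_n\asymp4^{-nr}$ were chosen with enough margin that the blow-up $D_s^+m(x)=\infty$ is visible for every $s$ simultaneously, which the bare monotonicity argument leaves implicit.
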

Note that the $F$ constructed above, despite in $L^s$ for all $s<\infty$, is AC$_s$ on $P$ only if $s<r\log_23$ (see  \eqref{star}). Therefore, the following question remains open.
\begin{que}
Is there a function which is HK$_r$-integrable for all $r\ge1$, but not P$_1$-integrable?
\end{que}
\begin{prob}
Define a Perron-type integral equivalent to the HK$_r$-integral.
\end{prob}


\begin{thebibliography}{24}
\bibitem{gordon} Gordon, L.: Perron's integral for derivatives in $L^{r}$. Studia Math.\ {\bf28}, 295--316 (1967).
\bibitem{MusialSagher2004} Musial, P., Sagher, Y.: The $L^{r}$ Henstock--Kurzweil integral. Studia Math.\ {\bf160}(1),  53--81 (2004).
\bibitem{MST1} Musial, P.,  Skvortsov, V.,  Tulone, F.: The HK$_r$-integral is not contained in the P$_r$-integral. Proc. Amer. Math. Soc.\ {\bf150}(5), 2107--2114 (2022)
\end{thebibliography}
\end{document}